\begin{document}
\urlstyle{sf}
\newtheorem{thrm}{Theorem}\numberwithin{thrm}{section}
\newtheorem{lem}[thrm]{Lemma}
\newtheorem{prop}[thrm]{Proposition}
\newtheorem{cor}[thrm]{Corollary}
\newtheorem{conj}[thrm]{Conjecture}
\theoremstyle{definition}
\newtheorem{definition}[thrm]{Definition}
\newtheorem{rem}[thrm]{Remark}
\newtheorem{ex}[thrm]{Example}
\numberwithin{equation}{section}
\newfont{\cyrm}{wncysc10}\newcommand{\sha}{\text{\cyrm{X}}}\

\title[Rational points on curves over function fields]{Rational points on curves over function fields}
\author{Am\'{\i}lcar Pacheco}
\address{Universidade Federal do Rio de Janeiro, Instituto de Matem\'atica, Brasil, Av. Athos da Silveira Ramos 149, CT, Bl. C, Cidade Universit\'aria, Ilha do Fund\~ao, Caixa Postal 68530, 21941-909 Rio de Janeiro, RJ, Brasil}
\email{amilcar@acd.ufrj.br}
\author{Fabien Pazuki}
\address{Institut de Math\'ematiques de Bordeaux, Universit\'e Bordeaux I,
351, cours de la Lib\'eration, 
33405 Talence, France}
\email{fabien.pazuki@math.u-bordeaux1.fr}
\thanks{This paper was completed during the sabbatical leave of the first author which was spent at the Institut de Math\'ematiques de Jussieu, Paris. He would like to thank this institution for its support. This stay had also the support of  CNPq (Brazil), CNRS (France) and Paris Science Foundation. The first author would also like to thank UFRJ for giving him this leave. Both authors would like to thank Universit\'e Bordeaux 1, ANR HAMOT and ANR ARIVAF (France).}
\date{\today}
\begin{abstract}
We provide in this paper an upper bound for the number of rational points on a non-isotrivial curve defined over a one variable function field over a finite field. The bound only depends on the curve and the field, and not on the Jacobian variety of the curve.
\end{abstract}
\maketitle

\section{Introduction}

Let $k$ be a finite field of cardinality $q$ and of positive characteristic $p$. Let $\mathcal{C}$ be a smooth, projective, geometrically connected curve defined over $k$ of genus $g$. Denote by $K=k(\mathcal{C})$ its function field. Let $K_s$ be a separable closure of $K$. Given a smooth, projective, geometrically connected curve $X$ defined over $K$ of genus $d\ge2$, the analogue of the {Mordell}'s conjecture asks whether the set $X(K)$ is finite.

This does not come without a constraint, otherwise this question would have a trivial negative answer. One has to assume that $X$ is non-isotrivial. This means that there does not exist a smooth projective geometrically connected curve $X_0$ defined over a finite extension $l$ of $k$ and a common extension $L$ of both $K$ and $l$ such that $X\times_KL\cong X_0\times_lL$ (\emph{cf.} \cite{Sam}).  Under the aforementioned condition the finiteness of $X(K)$ is a theorem due to {Samuel} \cite{Sam}. 

Our purpose is to give an effective upper bound for the cardinality of the set $X(K)$ in terms of the minimal number of invariants associated with our given geometric situation. Namely, our upper bound will depend on the following parameters. (i) The genus $d$ of $X/K$; (ii) the genus $g$ of $\mathcal{C}/k$; (iii) the inseparable degree $p\sp e$ of the map $\mathcal{C}\to\bar{\mathscr{M}}_g$ from $\mathcal{C}$ to the compactification of the moduli scheme of genus $g$ curves associated to a model $\mathcal{X}\to\mathcal{C}$ of $X/K$  \footnote{Observe that $p\sp e$ does not depend on the choice of the model $\mathcal{X}\to\mathcal{C}$, for a further discussion see Section \ref{secdescent}.}.  (iv) The conductor $f_{X/K}$ of $X/K$ (this will be defined later in the text). Let us insist on the fact that this bound does not depend on the Jacobian variety $J_X$ of the curve $X$. The rank $r$ of the Mordell-Weil group $J_X(K)$ is not used on the bound. In the geometric case this rank is bounded in terms of $d$ and $g$ (cf. Ogg's bound, see Remark \ref{remrank}). We observe, however, that  the bound in terms of  the conductor of $J_X/K$ would be stronger (cf. Proposition \ref{propf}), but the point is to show that the bound can be expressed in terms of only the curve itself. Our main result is the following theorem.
\vspace{1cm}

\begin{thrm}\label{bound}
Let $k$ be a finite field of cardinality $q$ and characteristic $p$, $\mathcal{C}$ a smooth, projective, geometrically connected curve defined over $k$ of genus $g$ and denote by $K=k(\mathcal{C})$ its function field.  Let $X/K$ be a smooth, projective, geometrically connected curve defined over $K$ of genus $d\ge2$. We suppose that $X$ is non-isotrivial.  
\begin{enumerate}
\item[(a)] If $X$ is defined over $K$, but not over $K^p$, then the following inequality holds: 
$$\#X(K)\leq \,p^{2d\cdot(2g+1)+f_{X/K}} \cdot3^d \cdot(8d-2)\cdot d!.$$
Denote the right hand side of the latter inequality by $C_{\mathrm{BV}}$. 
\item[(b)] More generally, suppose that $p>2d+1$. If $X$ is defined over $K^{p^e}$, but not over $K^{p^{e+1}}$ for some natural integer $e$,  then
$$\#X(K)\leq  C_{\mathrm{BV}}\cdot  C_{\mathrm{desc}}\sp e,$$
where one can take $C_{\mathrm{desc}}=q\sp{c_0}$ and $c_0=g-1+f_{X/K}+\frac{1}{2}\cdot p^{e+1} \cdot d\cdot (2g-2+2^{4d^2}\cdot f_{X/K})$. 
\end{enumerate}
\end{thrm}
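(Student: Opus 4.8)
The plan is to treat the separable case (a) by the Bombieri--Vojta method adapted to the function field $K$, and then to reduce the general case (b) to (a) by peeling off the inseparable part of the moduli map $\mathcal C\to\bar{\mathscr M}_g$ through an iterated Frobenius descent. The key structural point is that the hypothesis in (a) that $X$ is defined over $K$ but \emph{not} over $K^p$ forces the Gauss/moduli map to be separable (the inseparable degree $p^e$ with $e=0$), and separability is exactly what makes the relevant geometric height inequality available.

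For part (a), I would first fix an Abel--Jacobi embedding $j\colon X\hookrightarrow J_X$, passing to a controlled extension if no $K$-rational base point is at hand, and equip $J_X(K)\otimes\mathbb{R}$ with the Euclidean structure coming from the N\'eron--Tate canonical height $\hat h$ attached to a symmetric theta divisor; write $r$ for the Mordell--Weil rank. The set $X(K)$ then injects into the lattice $J_X(K)/\mathrm{tors}$, and I would split it into \emph{small} points, those of height below an explicit threshold governed by the conductor $f_{X/K}$, and \emph{large} points. The large points are controlled by Mumford's gap principle together with Vojta's inequality: once $\hat h(P)$ exceeds the threshold, two points of comparable height must be angularly separated, so each narrow cone in $J_X(K)\otimes\mathbb{R}$ carries a geometrically growing sequence of heights and hence few points. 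Quantifying this yields a bound for the number of large points depending only on the dimension $d=\dim J_X$ of the ambient abelian variety, which is where the explicit constant $3^d\cdot(8d-2)\cdot d!$ originates; the rank $r$, bounded in terms of $d$ and $g$ by Ogg's estimate (Remark \ref{remrank}) when needed, does not survive into the final count. The small points are counted directly on the model $\mathcal X\to\mathcal C$, and their number is estimated through the geometry of $\mathcal C/k$ and the bad reduction of $X$; the $p$-power shape of the prefactor $p^{2d\cdot(2g+1)+f_{X/K}}$ reflects the characteristic-$p$ geometry encoded in $g$, $d$, and $f_{X/K}$, with Proposition \ref{propf} providing the sharper conductor-based estimate that the stated bound coarsens.

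For part (b), I would descend through Frobenius. Since $X$ is defined over $K^{p^e}$ but not over $K^{p^{e+1}}$, write $X\cong X_0\times_{K^{p^e}}K$ with $X_0$ over $F:=K^{p^e}$ satisfying the hypothesis of (a) relative to $F$: it is non-isotrivial and is not defined over $F^p=K^{p^{e+1}}$. Under the field isomorphism $F\cong K$ furnished by the $e$-fold Frobenius, $X_0$ becomes a curve to which (a) applies, giving $\#X_0(F)\le C_{\mathrm{BV}}$ after recording the change of invariants. It then remains to compare $\#X(K)=\#X_0(K)$ with $\#X_0(F)$ across the purely inseparable extension $K/F$ of degree $p^e$; tracking how the genus of the base and the conductor grow under this inseparable base change produces the descent factor $C_{\mathrm{desc}}^{\,e}=q^{e c_0}$ with $c_0$ as stated. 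The hypothesis $p>2d+1$ is used to keep the descent unobstructed, ensuring that the relevant degrees are prime to $p$ and that the genus-$d$ fibres behave well under the Frobenius twist.

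I expect the principal difficulty to lie in the characteristic-$p$ bookkeeping of part (b): precisely controlling the increase of the conductor and of the genus of the base under the iterated purely inseparable descent, so that the descent contribution collapses into the clean exponential $q^{e c_0}$, rather than accumulating uncontrolled ramification. By contrast, the counting machinery of (a) is an adaptation of the established Bombieri--Vojta argument; the genuinely new and delicate point is isolating and quantifying the inseparable part of the variation of $X/K$ and showing that it contributes only the controlled factor $C_{\mathrm{desc}}^{\,e}$.
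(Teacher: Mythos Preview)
Your proposal misidentifies the method in both parts.

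For part (a), the paper does \emph{not} use the Bombieri--Vojta counting argument (heights, cones, Mumford gap, Vojta inequality). It invokes the Buium--Voloch theorem directly: if $X$ is not defined over $K^p$ and $\Gamma\subset J_X(K_s)$ has $\Gamma/p\Gamma$ finite, then $\#(X\cap\Gamma)\le \#(\Gamma/p\Gamma)\cdot p^d\cdot 3^d\cdot(8d-2)\cdot d!$. One takes $\Gamma=J_X(K)$, bounds $\#(J_X(K)/pJ_X(K))\le p^{r+d}$, then bounds $r$ by Ogg's estimate $r\le 4dg+f_{J_X/K}$ and replaces $f_{J_X/K}$ by $f_{X/K}$ via Proposition~\ref{propf}. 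The constant $3^d\cdot(8d-2)\cdot d!$ is native to Buium--Voloch's $p$-jet argument, not to sphere-packing in $J_X(K)\otimes\mathbb{R}$; and contrary to your claim, the rank \emph{does} survive into the final count---it is absorbed into the exponent $2d(2g+1)+f_{X/K}$ precisely through Ogg's bound. Your ``small points counted directly on the model'' step has no counterpart here and would not produce this specific $p$-power.

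For part (b), the descent is not a comparison of $X_0(K)$ with $X_0(K^{p^e})$ across a field extension, and the constant $c_0$ does not arise from tracking how $g$ and $f_{X/K}$ change under inseparable base change. The paper bounds the index $(J_X(K):F(J_{X_1}(K)))$ by the order of the Selmer group $\mathrm{Sel}(K,F)$ for the relative Frobenius isogeny $F:J_{X_1}\to J_X$, computed in flat cohomology. Locally this Selmer group is identified with pieces of $K_v^*/K_v^{*p}$; globally it embeds into $H^0(\mathcal{C},\Omega^1_{\mathcal{C}}(-D))^{\mathscr{C}}$, whose dimension is bounded by Riemann--Roch in terms of $g$, $f_{J_X/K}$, and the differential height $h_{\mathrm{diff}}(J_X/K)$. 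The latter is then controlled by the $abc$-theorem for semi-abelian schemes after passing to $L=K(J_X[2])$, and \emph{this} is where $p>2d+1$ enters: it guarantees semi-stable reduction over a Galois extension of degree prime to $p$, so that Selmer groups behave well under restriction and the Swan conductor vanishes. The factor $2^{4d^2}$ is the degree bound $[L:K]\le\ell^{4d^2}$ with $\ell=2$. Iterating the Frobenius chain $X_e\to X_{e-1}\to\cdots\to X$ and using that all the $J_{X_i}$ are isogenous (hence share a conductor) yields $C_{\mathrm{desc}}^e$. Your sketch contains none of this machinery, and the vague ``keeping the descent unobstructed'' does not capture the actual role of the hypothesis on $p$.
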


\begin{rem}
In a recent paper \cite{CoUlVo},  Concei\c c\~ao, Ulmer and Voloch provide some explicit examples of curves $X_a$ for which the number of rational points cannot be bounded by a quantity independent of $X_a$. Consider the curve $X_a/\mathbb{F}_p(t)$ defined by the affine equation  $y^2=x\cdot(x^r+1)\cdot(x^r+a^r)$, where $p>3$ and $r$ is coprime to $2p$ and $a=t^{p^{n}+1}$. Say $n=2^m$ with $m\in{\mathbb{N}}$ big enough. Then $\#X_a(\mathbb{F}_p(t))\geq d(n)\gg \log n\gg \log\log h(X_a/\mathbb{F}_p(t))\gg \log\log f_{J_{X_a}/\mathbb{F}_p(t)}$, where the last step is obtained thanks to \cite[Corollary 6.12]{HiPa}. In fact, the height $h(X_a/\mathbb{F}_p(t))$ is the height of the equation defining the curve which is comparable to the theta height of its Jacobian variety which is comparable with the differential height of the Jacobian variety. The inequality mentioned above relates the conductor of the Jacobian with its differential height.
\end{rem}

The history of explicit upper bounds for $\#X(K)$ starts with the work of {Szpiro} \cite{Szp} which in fact gives an explicit upper bound for the height of points in $X(K)$. This depends, however, on the geometry of a semi-stable fibration on curves $\phi:\mathcal{X}\to\mathcal{C}$ which gives a minimal model of $X/K$ over $\mathcal{C}$. One of the goals of the current paper is to obtain a bound which does not depend on the geometry of any model of $X/K$ over $\mathcal{C}$. 

We start with an upper bound for the number of elements of $X(K)$, when $X$ is defined over $K$, but not over $K\sp p$. This follows from a result due to {Buium} and {Voloch} \cite{BuVo}. In fact, their result gives an explicit proof of a conjecture of Lang, Mordell's conjecture is a particular case of the latter. We then extend the first result to curves which can be defined over $K\sp{p\sp n}$ for some integer $n\ge1$. The crucial step is the $F$-descent of abelian varieties in characteristic $p>0$ (see Section \ref{secdescent}). 

\section{Proof of Theorem \ref{bound} part (a)}
We start by recalling:

\begin{thrm}[{Buium-Voloch}] \cite[Theorem]{BuVo}\label{bvthm}
Let $k$ be a finite field of characteristic $p$, $K$ a one variable function field over $k$, $X/K$ a smooth, projective, geometrically curve defined over $K$ of genus $d\ge2$. We suppose that $X$ is not defined over $K\sp p$. Let $\Gamma$ a subgroup of $J_X(K_s)$ such that $\Gamma/p\Gamma$ is finite. The following inequality holds : 
$$\#(X\cap\Gamma)\leq\#(\Gamma/p\Gamma) \cdot p^d \cdot3^d \cdot(8d-2) \cdot d!.$$
\end{thrm}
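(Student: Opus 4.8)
The statement is the explicit Mordell--Lang bound of Buium and Voloch, so my plan is to reconstruct their differential-algebraic argument: transport the problem to a ``derivative'' homomorphism on the Jacobian, observe that this homomorphism is constant on the cosets of $p\Gamma$, and then bound each of its level sets on $X$ by intersection theory, the three factors $\#(\Gamma/p\Gamma)$, $p^d$, and $3^d\cdot(8d-2)\cdot d!$ arising respectively from the number of cosets, a Frobenius twist, and a B\'ezout estimate.

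\emph{First}, I would fix an Abel--Jacobi embedding $X\hookrightarrow A:=J_X$ (after a harmless finite base change to produce a rational base point, which does not affect the count) and a nonzero $k$-derivation $\partial$ of $K$; such a $\partial$ exists because $K/k$ has transcendence degree one and $k$ is perfect. Using $\partial$ together with the Gauss--Manin connection on the de Rham cohomology of $A/K$, I would build Manin's ``logarithmic derivative'' homomorphism
$$ \psi \colon A(K_s) \longrightarrow \mathrm{Lie}(A)\otimes_K K_s, $$
the additive map recording the $\partial$-derivative of a moving point. The essential feature is that $\psi$ annihilates $p$-divisible points: since $[p]=V\circ F$ factors through the relative Frobenius $F\colon A\to A^{(p)}$, a point of $pA(K_s)$ has Frobenius-divisible, hence $p$-power, coordinates, and $\partial$ kills $p$-th powers, so the derivative of $V(F(P))=pP$ vanishes. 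Consequently $\psi$ vanishes on $p\Gamma$ and is therefore constant on each coset of $p\Gamma$ in $\Gamma$.

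\emph{Second}, I would reduce to a level-set count. Because $\psi$ factors through $\Gamma/p\Gamma$, it takes at most $\#(\Gamma/p\Gamma)$ distinct values on $\Gamma$, so
$$ X\cap\Gamma \;\subseteq\; \bigcup_{v\in\psi(\Gamma)} \bigl(X\cap\psi^{-1}(v)\bigr), $$
a union of at most $\#(\Gamma/p\Gamma)$ fibers. It then suffices to bound $\#\bigl(X\cap\psi^{-1}(v)\bigr)$ uniformly by $p^d\cdot 3^d\cdot(8d-2)\cdot d!$. The hypothesis that $X$ is \emph{not} defined over $K^p$ enters decisively here: it forces $\psi|_X$ to be non-constant, for otherwise the $\partial$-derivative of $X$ would vanish identically and $X$ would descend to $K^p$. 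Hence each fiber $X\cap\psi^{-1}(v)$ is zero-dimensional, and its cardinality is a genuine intersection number.

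\emph{Finally}, I would estimate that intersection number. Interpreting $\psi|_X$ through a prolongation (jet) map and the theta embedding of $A$, the fiber $X\cap\psi^{-1}(v)$ is the common zero locus on $X$ of the $d$ coordinate functions of the derivative, each of controlled degree, while the Frobenius factorization $[p]=V\circ F$ contributes the twist factor $p^d$. A B\'ezout-type estimate for this intersection on $X$, whose degree under the theta embedding is $(\Theta^{d})=d!$, then produces the geometric constant $3^d\cdot(8d-2)\cdot d!$, and summing over the at most $\#(\Gamma/p\Gamma)$ values of $v$ gives the claimed bound. \textbf{The main obstacle} is precisely this effective last step: constructing the prolongation concretely enough to read off sharp degrees for the differential conditions, and turning the non-isotriviality hypothesis into the nonvanishing of $\psi|_X$, so that B\'ezout applies and yields the explicit constants rather than a merely finite count.
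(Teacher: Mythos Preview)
The paper does not prove this statement at all: Theorem~\ref{bvthm} is quoted verbatim from \cite{BuVo} and used as a black box in the proof of Theorem~\ref{bound}~(a). There is therefore no ``paper's own proof'' to compare your proposal against; what you have written is a sketch of the original Buium--Voloch argument, not of anything in the present paper.

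As a reconstruction of \cite{BuVo} your outline is in the right spirit --- the Manin-type differential homomorphism, the reduction to fibers indexed by $\Gamma/p\Gamma$, the use of non-definability over $K^p$ to force $\psi|_X$ nonconstant, and a degree count for the final constant --- but it remains a plan rather than a proof. Two points deserve care if you carry it out: first, the homomorphism one actually uses in \cite{BuVo} is built from $p$-derivations (Buium's $p$-jet spaces) rather than the classical Gauss--Manin connection, and the precise statement is that it vanishes on the image of Frobenius, from which vanishing on $p\Gamma$ follows via $[p]=V\circ F$; second, the factor $3^d(8d-2)\,d!$ comes from an explicit intersection computation on a specific projective model, and your B\'ezout heuristic would need to be made equally explicit to recover those exact constants. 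Your own ``main obstacle'' paragraph already flags this honestly.
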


\begin{rem}\label{remJ}
Take $\Gamma=J_X(K)$. Then by the {Mordell-Weil} theorem $J_X(K)/$ \linebreak $pJ_X(K)$ is a finite group. Writing $J_X(K)=\mathbb{Z}^r\times J_X(K)_{\mathrm{tor}}$, where $r=\mathrm{rk}\,J_X(K)$, one has $J_X(K)/pJ_X(K)=(\mathbb{Z}/p\mathbb{Z})^r\times J_X(K)_{\mathrm{tor}}/pJ_X(K)_{\mathrm{tor}}$. Its order is bounded from above by $p\sp{d+r}$. Next we discuss an upper bound for the rank.
\end{rem}

\begin{rem}\label{remrank}
Let $k$ be any field and $\mathcal{C}$ smooth projective geometrically connected curve over $k$. Denote by $K=k(\mathcal{C})$ its function field. Let $A/K$ be a non-constant abelian variety over $K$ and denote by $(\tau,B)$ its $K/k$-trace ({cf.} \cite{Lang}). Let $\bar{k}$ be an algebraic closure of $k$. A theorem due to {Lang} and {N\'eron} (\cite{Lang}, \cite{LaNe}) states that the quotient group $A(\bar{k}(\mathcal{C}))/\tau B(\bar{k})$ is a finitely generated abelian group. {A fortiori}, the quotient group $A(K)/\tau B(k)$ is also finitely generated. {Ogg} in the 60's ({cf.} \cite{Ogg1}) produced the following upper bound for the rank of the geometric quotient $A(\bar{k}(\mathcal{C}))/\tau B(\bar{k})$ (hence of $A(K)/\tau B(k)$). Below we define the conductor $f_{A/K}$ of $A/K$. Let $d_0=\dim B$. Then the upper bound is $2d\cdot(2g-2)+f_{A/K}+4d_0\le4d\cdot g+f_{A/K}$. In particular, if $K$ is a one variable function field over a finite field, then $\mathrm{rk}\,A(K)\le4d\cdot g+f_{A/K}$. 
\end{rem}

\begin{definition}
Let $\ell\ne p$ be a prime number. Denote by $T_{\ell}(A)$ the $\ell$-adic {Tate} module of $A$ and define $V_{\ell}(A)=T_{\ell}(A)\otimes_{\mathbb{Z}_{\ell}}\mathbb{Q}_{\ell}$. For each place $v$ of $K$, denote by $I_v$ an inertia group at $v$ (well-defined up to conjugation). Let $\epsilon_v$ be the codimension of the subgroup of $I_v$-invariants $V_{\ell}(A)\sp{I_v}$ in $V_{\ell}(A)$. Let $\delta_v$ be the {Swan} conductor of $H\sp1_{\text{\'et}}(A_{K_s},\mathbb{Q}_{\ell})$ ({cf.} \cite{Serre}). Define the conductor divisor $\mathfrak{F}_{A/K}=\sum_v(\epsilon_v+\delta_v)\cdot [v]$, where $v$ runs through the places of $K$. Denote $f_{A/K}=\deg\mathfrak{F}_{A/K}$.
\end{definition}

\begin{definition}
A {model} of $X/K$ over $\mathcal{C}$ is a smooth projective geometrically connected surface $\mathcal{X}$ defined over $k$ and a proper flat morphism $\phi:\mathcal{X}\to\mathcal{C}$ such that the generic fiber $\mathcal{X}_{\eta}$ of $\phi$ is isomorphic to $X$. Each place $v$ of $K$ is identified with a point of $\mathcal{C}$. Denote by $\kappa_v$ the residue field at $v$ (which is a finite field) and let $\bar{\kappa}_v$ be an algebraic closure of $\kappa_v$. Denote by $\mathcal{X}_v$ the fiber of $\phi$ at $v$. For an algebraic variety $Z$ defined over a field $l$ and for an extension $L$ of $l$, denote $Z_L=Z\times_lL$.
\end{definition}
  
\subsection{Tools from \'etale cohomology} 
\begin{definition}
Let $Z$ be a smooth variety defined over a field $l$ with algebraic closure $\bar{l}$. Denote by $n=\dim Z$, for each $0\le i\le 2n$, let $H\sp i_{\text{\'et}}(X_{\bar{l}},\mathbb{Q}_{\ell})$ be the $i$-th \'etale cohomology group of $Z/l$. Define the Euler-Poincar\'e characteristic of $Z/l$ by $\chi(Z/l)=\sum_{i=0}\sp{2n}(-1)\sp i\dim_{\mathbb{Q}_{\ell}} H\sp i_{\text{\'et}}(Z_{\bar{l}},\mathbb{Q}_{\ell})$. This number is indeed independent from the choice of $\ell$.
\end{definition}

\begin{definition}\label{defartin}
Fix a place $v$ of $K$. The {Artin} conductor of the curve $X$ over $K$ at $v$ is defined as $f_{X/K,v}=-\chi(X_{K_s})+\chi(\mathcal{X}_{v,\bar{\kappa}_v})+\delta_v$, where $\chi(X_{K_s})$, respectively $\chi(\mathcal{X}_{v,\bar{\kappa}_v})$ denotes the {Euler-Poincar\'e} characteristic of $X_{K_s}$, respectively $\mathcal{X}_{v,\bar{\kappa}_v}$. The term $\delta_v$ denotes the {Swan} conductor of $H\sp1(X_{K_s},\mathbb{Q}_{\ell})$ at $v$ ({cf.} \cite[end of p. 414]{LiSa} for the definition of the Artin conductor, \cite{Serre} for the definition of the {Swan} conductor, as well as \cite[\S1]{Blo}).  Define the global conductor of the curve $X/K$ by $f_{X/K}=\sum_vf_{X/K,v}\cdot\deg v$, where $v$ runs through the places of $K$.
\end{definition}

The following proposition is a consequence of the subsequent lemma in \cite{Blo}.

\begin{prop}\label{propf}
We have the inequality $f_{J_X/K}\le f_{X/K}$. 
\end{prop}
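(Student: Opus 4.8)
\emph{Proof plan.} The plan is to prove the estimate place by place and then assemble it, reducing the global comparison to a purely local statement that is supplied by the lemma of \cite{Blo}. Since $f_{X/K}=\sum_v f_{X/K,v}\cdot\deg v$ and, by definition of the conductor divisor, $f_{J_X/K}=\deg\mathfrak{F}_{J_X/K}=\sum_v(\epsilon_v+\delta_v)\cdot\deg v$, with the same positive weights $\deg v$, it suffices to establish the local inequality $\epsilon_v+\delta_v\le f_{X/K,v}$ for every place $v$ of $K$.

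The first step is to discard the wild part. The Weil pairing gives $V_{\ell}(J_X)\cong H^1_{\text{\'et}}(X_{K_s},\mathbb{Q}_{\ell})^{\vee}(1)$ and $H^1_{\text{\'et}}(J_{X,K_s},\mathbb{Q}_{\ell})\cong H^1_{\text{\'et}}(X_{K_s},\mathbb{Q}_{\ell})$ as representations of the decomposition group at $v$. Because $\ell\ne p$ the Tate twist is unramified at $v$, and for an $\ell$-adic inertia representation passing to the dual or twisting by an unramified character changes neither the Swan conductor nor the dimension of the space of $I_v$-invariants (the latter using that inertia acts quasi-unipotently, so invariants and coinvariants have equal dimension). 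Hence the term $\delta_v$ occurring in $f_{J_X/K,v}$ equals the term $\delta_v$ occurring in $f_{X/K,v}$, and $\dim_{\mathbb{Q}_{\ell}}V_{\ell}(J_X)^{I_v}=\dim_{\mathbb{Q}_{\ell}}H^1(X_{K_s},\mathbb{Q}_{\ell})^{I_v}$. Cancelling $\delta_v$ and using $\chi(X_{K_s})=2-2d$ together with $\dim_{\mathbb{Q}_\ell}H^1(X_{K_s},\mathbb{Q}_\ell)=2d$, the inequality $\epsilon_v+\delta_v\le f_{X/K,v}$ becomes $\epsilon_v=2d-\dim H^1(X_{K_s})^{I_v}\le -\chi(X_{K_s})+\chi(\mathcal{X}_{v,\bar{\kappa}_v})$, that is, $2-\dim H^1(X_{K_s})^{I_v}\le\chi(\mathcal{X}_{v,\bar{\kappa}_v})$.

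This last inequality is exactly what the lemma of \cite{Blo} provides, and it is where the real work sits. The specialization (nearby cycle) spectral sequence for a model $\phi\colon\mathcal{X}\to\mathcal{C}$ at $v$ identifies the drop $-\chi(X_{K_s})+\chi(\mathcal{X}_{v,\bar{\kappa}_v})$ with the total dimension of the vanishing cohomology supported on $\mathcal{X}_v$, whereas $\epsilon_v$ detects only the part of that cohomology seen by the inertia action on $H^1(X_{K_s})$. In the tame case the monodromy–weight filtration yields $\dim H^1(X_{K_s})^{I_v}=2\sum_i g_i+b_1(\Gamma_v)$, with the $g_i$ the geometric genera of the components of $\mathcal{X}_v$ and $\Gamma_v$ its dual graph, while $\chi(\mathcal{X}_{v,\bar{\kappa}_v})=1-\bigl(b_1(\Gamma_v)+2\sum_i g_i\bigr)+m_v$ with $m_v$ the number of irreducible components; subtracting gives the Ogg--Saito type identity $f_{X/K,v}-f_{J_X/K,v}=m_v-1\ge 0$. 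The main obstacle is precisely the general case, where the special fiber may be non-reduced and the ramification wild, so that this clean combinatorial computation breaks down; it is here that one invokes the lemma of \cite{Blo}, which establishes the non-negativity of $\chi(\mathcal{X}_{v,\bar{\kappa}_v})-2+\dim H^1(X_{K_s})^{I_v}$ in full generality. Summing the resulting inequalities $f_{J_X/K,v}\le f_{X/K,v}$ over all $v$ with weights $\deg v$ yields $f_{J_X/K}\le f_{X/K}$.
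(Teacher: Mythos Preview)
Your plan is correct and follows the same route as the paper: reduce to a local comparison, match the Swan parts via $H^1_{\text{\'et}}(X_{K_s},\mathbb{Q}_\ell)\cong H^1_{\text{\'et}}(J_{X,K_s},\mathbb{Q}_\ell)$, and then invoke Bloch's lemma for the tame drop.

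Two streamlinings worth noting. First, the detour through $V_\ell(J_X)\cong H^1(X_{K_s})^\vee(1)$ and the quasi-unipotence argument is unnecessary: the paper simply uses the canonical isomorphism $H^1_{\text{\'et}}(X_{K_s},\mathbb{Q}_\ell)\cong H^1_{\text{\'et}}(J_{X,K_s},\mathbb{Q}_\ell)$ of inertia representations (cf.\ \cite[Corollary~9.6]{Milne}), from which equality of both $\epsilon_v$ and $\delta_v$ on the two sides is immediate. Second, your separation into a tame case and a general case is artificial. Bloch's lemma (the one stated in the paper, \cite[Lemma~1.2]{Blo}) holds for \emph{every} $v$ with no tameness or reducedness hypothesis: it gives $H^i_{\text{\'et}}(\mathcal{X}_{v,\bar\kappa_v},\mathbb{Q}_\ell)\cong H^i_{\text{\'et}}(X_{K_s},\mathbb{Q}_\ell)^{I_v}$ for $i=0,1$ and, via the exact sequence in part (II), $\dim H^2_{\text{\'et}}(\mathcal{X}_{v,\bar\kappa_v},\mathbb{Q}_\ell)=m_v$. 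Plugging this into the definition of $f_{X/K,v}$ yields the exact formula
\[
f_{X/K,v}=\bigl(\dim H^1(X_{K_s})-\dim H^1(X_{K_s})^{I_v}\bigr)+(m_v-1)+\delta_v=f_{J_X/K,v}+(m_v-1)
\]
at every place. So the ``clean combinatorial computation'' does not break down in the wild or non-reduced case; Bloch's lemma is exactly what delivers it in general, and the inequality is simply $m_v\ge 1$.
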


\begin{lem}\label{lemBlo}\cite[Lemma 1.2]{Blo}
Fix a place $v$ of $K$ and let $I_v$ be an inertia subgroup of $\mathrm{Gal}(K_s/K)$ at $v$. Then : 
\begin{enumerate}
\item[(I)] $H\sp i_{\text{\'et}}(X_{K_s},\mathbb{Q}_{\ell})\sp{I_v}\cong H\sp i_{\text{\'et}}(\mathcal{X}_{v,\bar{\kappa}_v},\mathbb{Q}_{\ell})$ for $i=0,1$.
\item[(II)] Let $M_v$ be the free abelian group generated by the irreducible components of $\mathcal{X}_{v,\bar{\kappa}_v}$. Since the individual components are not necessarily defined over $\kappa_v$, there is an action of $\hat{\mathbb{Z}}\cong\mathrm{Gal}(\bar{\kappa}_v/\kappa_v)$ on $M_v$. Moreover, there is an exact sequence of $\hat{\mathbb{Z}}$-modules : 
$$0\to\mathbb{Q}_{\ell}(-1)\to M_v\otimes\mathbb{Q}_{\ell}(-1)\to H\sp2_{\text{\'et}}(\mathcal{X}_{v,\bar{\kappa}_v},\mathbb{Q}_{\ell})\to H\sp2_{\text{\'et}}(X_{K_s},\mathbb{Q}_{\ell})\sp{I_v}\to0.$$ 
\end{enumerate}
\end{lem}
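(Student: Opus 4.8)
The plan is to prove both parts by a purely local analysis at $v$, realising the comparison between the special fibre and the $I_v$-invariants of the generic fibre through the theory of nearby cycles, and then pinning down the precise shape of the degree-two sequence by intersection theory on the model. First I would localise: replace $\mathcal{O}_{\mathcal{C},v}$ by its strict henselisation, obtaining a strictly henselian trait $S=\operatorname{Spec}\mathcal{O}^{\mathrm{sh}}_{\mathcal{C},v}$ with closed point $s$ (geometric, residue field $\bar{\kappa}_v$) and geometric generic point $\bar\eta$, so that $\operatorname{Gal}(\bar\eta/\eta)=I_v$. Base-changing the model gives a proper flat $f\colon\mathcal{X}_S\to S$ with $\mathcal{X}_S$ regular (it is the henselisation of the smooth surface $\mathcal{X}$), special fibre $\mathcal{X}_{v,\bar{\kappa}_v}=\bigcup_iC_i$ and geometric generic fibre computing $H^i(X_{K_s},\mathbb{Q}_\ell)$ with its $I_v$-action. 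Proper base change identifies $H^i(\mathcal{X}_S,\mathbb{Q}_\ell)\cong H^i(\mathcal{X}_{v,\bar{\kappa}_v},\mathbb{Q}_\ell)$. Since the relative dimension is one, the nearby-cycle complex $R\Psi\mathbb{Q}_\ell$ on $\mathcal{X}_{v,\bar{\kappa}_v}$ has only two nonzero sheaves: $R^0\Psi\mathbb{Q}_\ell=\mathbb{Q}_\ell$ (the Milnor fibres are connected because $\mathcal{X}_S$ is normal) and $R^1\Psi\mathbb{Q}_\ell$, a skyscraper supported at the singular points, on which $I_v$ acts through the local monodromy. The resulting two-row, $I_v$-equivariant spectral sequence
\[
E_2^{p,q}=H^p(\mathcal{X}_{v,\bar{\kappa}_v},R^q\Psi\mathbb{Q}_\ell)\Longrightarrow H^{p+q}(X_{K_s},\mathbb{Q}_\ell),
\]
with $I_v$ acting trivially on the row $q=0$, has a single possibly nonzero differential $d_2\colon E_2^{0,1}\to E_2^{2,0}$.

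For part (I) the case $i=0$ is immediate: $H^0(\mathcal{X}_{v,\bar{\kappa}_v})=\mathbb{Q}_\ell$ by Zariski's connectedness theorem (the generic fibre is geometrically connected and $S$ is henselian), matching $H^0(X_{K_s})^{I_v}$. In degree one the spectral sequence gives an $I_v$-equivariant short exact sequence $0\to H^1(\mathcal{X}_{v,\bar{\kappa}_v})\to H^1(X_{K_s})\to\ker d_2\to0$ in which $I_v$ acts trivially on the subobject and through the monodromy on $\ker d_2\subset E_2^{0,1}$. The inclusion of the subobject is exactly the specialisation map, so it is injective, and taking $I_v$-invariants reduces the claim to $(\ker d_2)^{I_v}=0$. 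This is the assertion that the monodromy operator $N=\log T$ is injective on the vanishing-cycle contribution, i.e. that this contribution occupies the top step of the monodromy weight filtration; granting it, the specialisation map $H^1(\mathcal{X}_{v,\bar{\kappa}_v})\to H^1(X_{K_s})^{I_v}$ is an isomorphism, which is (I) for $i=1$.

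For part (II) the same spectral sequence produces the edge surjection $H^2(\mathcal{X}_{v,\bar{\kappa}_v})=E_2^{2,0}\twoheadrightarrow E_\infty^{2,0}=H^2(X_{K_s})$, and since $I_v$ acts trivially on $H^2(X_{K_s})=\mathbb{Q}_\ell(-1)$ this is precisely the surjection onto $H^2(X_{K_s})^{I_v}$ on the right of the claimed sequence. It remains to describe its kernel. Here I would switch to intersection theory on the regular surface $\mathcal{X}_S$: under the identification $H^2(\mathcal{X}_{v,\bar{\kappa}_v})\cong M_v\otimes\mathbb{Q}_\ell(-1)$ by fundamental classes, the cycle classes of the components, paired via intersection numbers on $\mathcal{X}_S$, realise the map $M_v\otimes\mathbb{Q}_\ell(-1)\to H^2(\mathcal{X}_{v,\bar{\kappa}_v})$ as the intersection matrix $(C_i\cdot C_j)_{i,j}$. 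By Zariski's lemma this matrix is negative semi-definite with one-dimensional radical spanned by the class of the whole fibre $\sum_in_iC_i$ (where $n_i$ is the multiplicity of $C_i$), because $\mathcal{X}_{v,\bar{\kappa}_v}$ is connected; this yields the left-hand injection $\mathbb{Q}_\ell(-1)\hookrightarrow M_v\otimes\mathbb{Q}_\ell(-1)$. Identifying the image of this intersection-matrix map with the kernel of the edge map — equivalently, with $\operatorname{im}d_2$ — and splicing gives the four-term sequence of (II), with the stated $\hat{\mathbb{Z}}$-equivariance because all maps are defined over $\kappa_v$.

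The main obstacle is the monodromy input feeding degrees one and two: showing that the specialisation map hits exactly the invariants amounts to the local invariant cycle theorem for the curve, namely $(\ker d_2)^{I_v}=0$, i.e. the injectivity of $N$ on the top-weight vanishing cycles. I expect to obtain this from the monodromy weight filtration on $R\Psi\mathbb{Q}_\ell$ (the local monodromy theorem guarantees quasi-unipotence, and after semistable reduction the vanishing cycles are described explicitly, so that $N$ carries the top-weight part isomorphically onto a weight-zero part of $H^1(\mathcal{X}_{v,\bar{\kappa}_v})$). The same structure forces the compatibility $\operatorname{im}d_2=\operatorname{im}(M_v\otimes\mathbb{Q}_\ell(-1))$ needed to match the cohomological and intersection-theoretic descriptions of the kernel in (II). The remaining points — that non-reduced fibres do not affect $\mathbb{Q}_\ell$-cohomology, and that wild ramification is automatically absorbed because the construction computes the full (not merely tame) $I_v$-invariants — I expect to be routine.
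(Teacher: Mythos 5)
Note first that the paper gives no proof of this statement at all: it is imported verbatim from Bloch (\cite[Lemma 1.2]{Blo}) and used as a black box, so your sketch must stand on its own, and as written it has genuine gaps at exactly the two points where the real content lies. The first is your description of the nearby-cycle complex. The model $\phi:\mathcal{X}\to\mathcal{C}$ is only assumed proper and flat with smooth generic fibre; no section exists in general, so the special fibre may be non-reduced, and then your claim that $R^0\Psi\mathbb{Q}_\ell=\mathbb{Q}_\ell$ ``because $\mathcal{X}_S$ is normal'' is false. At a smooth point of a component of multiplicity $n$ one has locally $\pi=u\,t^{n}$ with $u$ a unit, and for $p\nmid n$ the geometric Milnor fibre has $n$ connected components, permuted by $I_v$ through the tame character: normality of the total space does not give connectedness. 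Consequently $I_v$ does not act trivially on the row $q=0$, $R^1\Psi$ need not be concentrated at the singular points of the fibre, and the identification $E_2^{1,0}=H^1(\mathcal{X}_{v,\bar{\kappa}_v},\mathbb{Q}_\ell)$, on which your proof of (I) for $i=1$ and the shape of your $d_2$ both rest, breaks down. Your closing remark that non-reduced fibres ``do not affect $\mathbb{Q}_\ell$-cohomology'' is true for $H^{*}(\mathcal{X}_{v,\bar{\kappa}_v})$ itself (insensitivity to nilpotents) but false for $R\Psi$, which is precisely where you use reducedness.

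The second gap is that the two statements you defer as ``the main obstacle'' are not loose ends but the theorem itself. The vanishing $(\ker d_2)^{I_v}=0$ is the local invariant cycle theorem in this setting, and ``after semistable reduction'' is not an innocent reduction: over the extension where semistability is achieved one must pass to a \emph{different} regular model (normalize and blow up), so the special fibre, the spectral sequence and $d_2$ all change; taking $\mathrm{Gal}$-invariants with $\mathbb{Q}_\ell$-coefficients handles the generic-fibre side, but you still have to compare $H^1$ of the two special fibres, and the lemma carries no tameness hypothesis (the assumption $p>2d+1$ appears only later in the paper), so wild ramification must genuinely be dealt with, not ``absorbed''. Similarly, the identification $\operatorname{im}d_2=\operatorname{im}\bigl(M_v\otimes\mathbb{Q}_\ell(-1)\to H^2(\mathcal{X}_{v,\bar{\kappa}_v},\mathbb{Q}_\ell)\bigr)$ is asserted by appeal to ``the same structure'': what is actually needed is, for instance, absolute purity on the regular surface giving $H^2_{\mathcal{X}_v}(\mathcal{X}_S,\mathbb{Q}_\ell)\cong M_v\otimes\mathbb{Q}_\ell(-1)$ by cycle classes, vanishing of the composite to $H^2(X_{K_s},\mathbb{Q}_\ell)$ via the localization sequence of the pair $(\mathcal{X}_S,\mathcal{X}_v)$, and a dimension count that itself rests on the unproven invariant-cycle statement. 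A more robust route to (I) for $i=1$, avoiding $R\Psi$ on a possibly non-reduced, non-semistable fibre, is the classical one: $H^1(X_{K_s},\mathbb{Q}_\ell)^{I_v}\cong V_\ell(J_X)^{I_v}=V_\ell$ of the special fibre of the N\'eron model (SGA 7, IX), compared with $H^1$ of $\mathcal{X}_{v,\bar{\kappa}_v}$ through $\mathrm{Pic}^0_{\mathcal{X}/\mathcal{O}_v}$ \`a la Raynaud --- whose hypotheses touch exactly the multiple-fibre phenomenon you glossed over, which is a sign that this point is where the work is, not a routine verification.
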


\begin{rem}\label{remdiv}
The definition of the conductor given in \cite{LiSa} agrees with that given in \cite{Blo} (up to sign).
\end{rem}

\begin{proof}[Proof of Proposition \ref{propf}]
It follows from the definition of $f_{X/K,v}$, Lemma \ref{lemBlo} and the fact that the action of the {Galois} group $\mathrm{Gal}(K_s/K)$ on the \'etale cohomology groups $H\sp i_{\text{\'et}}(X_{K_s},\mathbb{Q}_{\ell})$ (for $i=0,2$) is trivial that we have an equality : 
$$
f_{X/K,v}=\dim_{\mathbb{Q}_{\ell}}H\sp1_{\text{\'et}}(X_{K_s},\mathbb{Q}_{\ell})-\dim_{\mathbb{Q}_{\ell}}H\sp1_{\text{\'et}}(X_{K_s},\mathbb{Q}_{\ell})\sp{I_v}+m_v-1+\delta_v,
$$
where $m_v$ denotes the number of the irreducible components of $\mathcal{X}_{v,\bar{\kappa}_v}$. The proposition now follows from observing that $H\sp1_{\text{\'et}}(X_{K_s},\mathbb{Q}_{\ell})\cong H\sp1_{\text{\'et}}(J_{K_s},\mathbb{Q}_{\ell})$ (\emph{cf.} \cite[Corollary 9.6]{Milne}). 
\end{proof}

\begin{definition}
Let $l$ be a field of characteristic $p>0$ and $Z/l$ a smooth algebraic variety. Let $F_{\mathrm{abs}}:l\to l$ be the absolute Frobenius map defined by $a\mapsto a\sp p$. We define the smooth variety $Z\sp{(p)}$ by the commutative diagram
$$\begin{CD}
Z\sp{(p)}@>>>Z\\ @VVV@VVV\\ \mathrm{Spec}\,l @>>{F_{\mathrm{abs}}}>\mathrm{Spec}\,l.
\end{CD}$$
The relative Frobenius morphism $F:Z\to Z\sp{(p)}$ is defined so that composed with the upper horizontal arrow of the diagram gives the absolute Frobenius morphism $F_{\mathrm{abs}}:Z\to Z$.  This situation can be iterated by taking for any integer $e\ge1$ to get the $e$-th power $F\sp e:Z\to Z\sp{(p\sp e)}$ of $F$. 
\end{definition}

\begin{proof}[Proof of Theorem \ref{bound} part (a)]
Let $\jmath:X\hookrightarrow J_X$ be the embedding of $X$ into its Jacobian variety. Denote by $X(K)=\{x_1,\cdots,x_m\}$ the finite set of $K$-rational points of $X$. Let $\Gamma$ be the subgroup of $J_X(K)$ generated by the images $\{\jmath(x_1),\cdots,\jmath(x_m)\}$ of these points under the embedding $\jmath$. Observe that 
$$\#(\Gamma/p\Gamma)\le\#(J_X(K)/pJ_X(K))\le p\sp{r+d}\le p\sp{d\cdot(4g+1)+f_{X/K}}$$ 
by Remarks \ref{remJ} and \ref{remrank} and Proposition \ref{propf}. The result is now a consequence of Theorem \ref{bvthm}.
\end{proof}

\section{Proof of Theorem \ref{bound} part (b) : $F$-descent in characteristic $p$}\label{secdescent}
Let $K$ be a one variable function field over a finite field of characteristic $p>0$.

\subsection{Selmer groups}\cite[\S1]{ul}
We start with the more general set-up of an isogeny $f:A\to B$ of non-constant abelian varieties defined  over $K$. We use the convention that all cohomology groups will be computed in terms of the flat site. As a consequence, at the flat site of $K$, we have a short exact sequence of group schemes given by $0\to\ker f\to A\longrightarrow B\to0$. 

For any place $v$ of $K$, let $K_v$ be the completion of $K$ at $v$. Denote by $\mathrm{Sel}(K_v,f)$ the image of the coboundary map $\delta_v:B(K_v)\to H\sp1(K_v,\ker f)$. The global Selmer group $\mathrm{Sel}(K,f)$ is defined as the subset of those elements in $H\sp1(K,\ker f)$ whose restriction modulo $v$ is trivial in $\mathrm{Sel}(K_v,f)$ for every place $v$ of $K$. 

Recall that the Tate-Shafarevich group $\sha(A/K)$ is defined as $\ker(H\sp1(K_v,A)\to\prod_vH\sp1(K_v,A))$. The isogeny $f$ induces a map $f:\sha(A/K)\to\sha(B/K)$ whose kernel is denoted by $\sha(A/K)_f$. Then $\mathrm{Sel}(K,f)$ appears in the following exact sequence of groups: $0\to B(K)/f(A(K))\to\mathrm{Sel}(K,f)\to\sha(A/K)_f\to0$. In practise $\mathrm{Sel}(K,f)$ is finite and effectively computable. 

Denote by $\mathcal{O}_v$ the valuation ring of $K_v$. If both $A$ and $B$ have good reduction over $\mathcal{O}_v$, then the restriction map $H\sp1(\mathcal{O}_v,\ker f)\to H\sp1(K_v,\ker f)$ induces an isomorphism $\mathrm{Sel}(K_v,f)\cong H\sp1(\mathcal{O}_v,\ker f)$. If $L/K_v$ is a Galois extension of degree prime to $\deg\,f$, then the inclusion map $H\sp1(K_v,\ker f)\to H\sp1(L,\ker f)$ induces an isomorphism $\mathrm{Sel}(K_v,f)\cong\mathrm{Sel}(L,f)\sp G$. Similarly, if $L/K$ is a finite Galois extension of degree prime to $\deg f$, then $\mathrm{Sel}(K,f)=\mathrm{Sel}(L,f)\sp G$. 

\subsection{Group cohomology}\cite[Chapter VII, Appendix]{serrelf}
Let $G$ be a group and $A$ a $G$-module (which we do not assume to be abelian). We make the convention that $G$ acts on $A$ by the left. In this context we define cohomology groups as follows. First, $H\sp0(G,A)=A\sp G$. A cocycle is a map $G\to A$ given by $s\mapsto a_s$ such that $a_{st}=a_s\cdot s(a_t)$. Given two cocycles $a,b$ we say that they are cohomologous, if there exists $\alpha\in A$ such that $b_s=\alpha\sp{-1}\cdot a_s\cdot s(\alpha)$ for every $s\in G$. The quotient of the group of cocycles by the group of coboundaries is $H\sp1(G,A)$. Suppose that $A$, $B$ and $C$ are $G$-modules such that  $B\cong A\oplus C$, then (by definition) $H\sp1(G,B)\cong H\sp1(G,A)\oplus H\sp1(G,C)$. 

\subsection{$p$-rank and Lie algebras}\cite[Theorem, p. 139]{mum}\label{differential}
In the case of an abelian variety $\mathfrak{A}$ defined over an algebraically closed field $l$, denote by $\mathfrak{A}\sp{\vee}=\mathrm{Pic}\sp0\,\mathfrak{A}$ its dual abelian variety. Then $\mathrm{Lie}\,\mathfrak{A}\sp{\vee}\cong H\sp1(\mathfrak{A},\mathcal{O}_{\mathfrak{A}})$, moreover under this isomorphism the $p$-th power map on $\mathrm{Lie}\,\mathfrak{A}$ goes over the Frobenius map $F$ on $H\sp1(\mathfrak{A},\mathcal{O}_{\mathfrak{A}})$. In particular, $r=p\text{-rk}\,\mathfrak{A}=\dim_{\mathbb{F}_p}\,\mathfrak{A}[p]=\dim_{\mathbb{F}_p}H\sp1(\mathfrak{A},\mathcal{O}_{\mathfrak{A}})\sp F=\dim_{\mathbb{F}_p}\,(\mathrm{Lie}\,\mathfrak{A})_{\mathrm{ss}}$. It is known from $p$-linear algebra that $\ker F\cong\mu_p\sp{\oplus r}$. Therefore $H\sp1(K_v,\ker F)\cong H\sp1(K_v,\mu_p)\sp{\oplus r}\cong(K_v\sp*/K_v\sp{*p})\sp{\oplus r}$. 

We now return to our original abelian variety $A/K$, and denote by $\varphi:\mathcal{A}\to\mathcal{C}$ its N\'eron model over $\mathcal{C}$. Let $e_{\mathcal{A}}:\mathcal{C}\to\mathcal{A}$ be its neutral section. Denote $\omega_{\mathcal{A}/\mathcal{C}}=e_{\mathcal{A}}\sp*\Omega\sp1_{\mathcal{A}/\mathcal{C}}$ and $\tilde{\omega}_{\mathcal{A}/\mathcal{C}}=\wedge\sp d\omega_{\mathcal{A}/\mathcal{C}}$, where $d=\dim\,A$. The degree of $\tilde{\omega}_{\mathcal{A}/\mathcal{C}}$ is defined as the differential height of $A/K$ and denoted by $h_{\mathrm{diff}}(A/K)$. Then $\tilde{\omega}_{\mathcal{A}/\mathcal{C}}$ corresponds to a unique Weil divisor $\mathcal{D}_{\mathcal{A}/\mathcal{C}}$ on $\mathcal{C}$. 

The relative version of the first paragraph of this section states that if $\varphi\sp{\vee}:\mathcal{A}\sp{\vee}\to\mathcal{C}$ is the dual group scheme of $\varphi:\mathcal{A}\to\mathcal{C}$, then Lie$\,\mathcal{A}\sp{\vee}\cong R\sp1\varphi_*\mathcal{O}_{\mathcal{A}}$. The latter is dual to $\Omega\sp1_{\mathcal{C}}(\mathcal{D}_{\mathcal{A}/\mathcal{C}})$. 

Denote by $\mathscr{C}$ the Cartier operator acting on $\Omega\sp1_{\mathcal{C}}$ (cf. \cite{serremex}). By the previous result and Serre's duality, the $p$-th power map on Lie$\,\mathcal{A}\sp{\vee}$ goes over $F$ on $R\sp1\varphi_*\mathcal{O}_{\mathcal{A}}$ which goes over to $\mathscr{C}$ on $\Omega\sp1_{\mathcal{C}}(\mathcal{D}_{\mathcal{A}/\mathcal{C}})$. In particular, $p$-Lie$\,\mathcal{A}\sp{\vee}$ is dual to $\Omega\sp1_{\mathcal{C}}(\mathcal{D}_{\mathcal{A}/\mathcal{C}})\sp{\mathscr{C}}$. 

Let $D_1,\cdots,D_r$ be a basis of $H\sp0(\mathcal{C},p$-Lie$\,\mathcal{A}\sp{\vee})$, then there exists $a_i\in\bar{k}$ such that $D_i\sp p=a_i\cdot D_i$, where $\bar{k}$ denotes the algebraic closure of $k$. Denote $\mathcal{L}_{\mathcal{A}}=\,$Lie$\,\mathcal{A}\sp{\vee}$. In this case the Oort-Tate classification of finite flat group schemes of order $p$ in characteristic $p$ implies that we associate to $a_i$ the group scheme $G_{\mathcal{L}_{\mathcal{A}},0,a_i}=G_{0,a_i}$ over $\mathcal{C}$ (cf. \cite[Chapter III, 0.9]{mil}, \cite{taoo}).

\subsection{Local computation}
\subsubsection{Potential good reduction}
We fix a place $v$ of $K$. Let $K_s$ be a separable closure of $K$ and denote by $I_v$ a inertia subgroup of $\mathrm{Gal}(K_s/K)$ at $v$ (this is well-defined up to conjugation). By definition $A_{K_v}=A\times_KK_v$ has potential good reduction at $v$, if there exists a finite extension $K'$ of $K_v$ such that $A_{K'}=A_{K_v}\times_{K_v}K'$ has good reduction at $v$. By \cite[Theorem 2]{seta}, if $\ell\ne p$ is a prime number and $\rho_{\ell}:\mathrm{Gal}(K_s/K)\to\mathrm{Aut}(T_{\ell}(A))$ is the Galois representation on the Tate module, then $A$ has potential good reduction at $v$ if and only if $\rho_{\ell}(I_v)$ is finite. 

\subsubsection{Description of Selmer groups}\cite[\S3]{ul}
Suppose that we are on this case and let $K'$ be as above. Denote by $v'$ the valuation of $K'$ over $v$. Let $n=-v'(\mathcal{D}_{\mathcal{A}/\mathcal{C}})$. Define $U\sp{[i]}=\{\bar{f}\in K_v\sp*/K_v\sp{*p}\,|\,\mathrm{ord}_v(f)\ge1-i\}$. Apply \cite[Chapter III, \S7.5]{mil} to get $H\sp1(\mathcal{O}_{K'},G_{0,a_i})\cong U_{K'}\sp{[pn]}$. The previous properties of Selmer groups give 
$$\mathrm{Sel}(K',F)=H\sp1(\mathcal{O}_{K'},\ker f)\cong H\sp1(\mathcal{O}_{K'},\mu_p)\sp{\oplus r}\cong(U_{K'}\sp{[pn]})\sp{\oplus r}.$$ 
Then Galois invariants as in Subsection 3.1, we get 
$$\mathrm{Sel}(K,F)\cong\mathrm{Sel}(K',F)\sp{\mathrm{Gal}(K'/K)}\cong(U_{K_v}\sp{[i]})\sp{\oplus r},$$
where $i=-p\cdot v(\mathcal{D}_{\mathcal{A}/\mathcal{C}})$. 

\subsubsection{Potential semi-abelian reduction}
We suppose that $p>2d+1$, where $d=\dim A$. In this case, $A$ acquires everywhere semi-stable reduction over $L=K(A[\ell])$ for any prime $\ell\ne p$ (cf. \cite{groth}). In particular, for the places where the reduction is already good, we are reduced to the latter subsubsection. So we suppose that we are in the case of bad semi-abelian reduction. In this case by \cite{bolura} there exists a semi-abelian variety $G\in\mathrm{Ext}\sp1(B,\mathbb{G}_m\sp t)$ defined over $L_w$, where $B$ is an abelian variety with good reduction at $w$, and a lattice $\Lambda\subset G(L_w)$ such that $A(L_w)\cong G(L_w)/\Lambda$. 

The action of the absolute Fronenius map $F$ of $G$ engenders the semi-abelian variety $G\sp{(p)}\in\mathrm{Ext}\sp1(B\sp{(p)},\mathbb{G}_m\sp t)$, where $B\sp{(p)}$ is the image of $B$ under $F$. One checks that $A\sp{(p)}(L_w)\cong G\sp{(p)}(L_w)/\Lambda\sp{(p)}$, where the lattice $\Lambda\sp{(p)}$ is generated by the vectors obtained from the generators of $\Lambda$ by raising each component to $p$. Recall that there exists an isogeny $V:A\sp{(p)}\to A$ (called the Verschiebung) such that $V\circ F=[p]_A$ and $F\circ V=[p]_{A\sp{(p)}}$. 

The coboundary map is given by $A(L_w)\to H\sp1(L_w,\ker F)$. We have already shown that the latter is isomorphic to $(L_w\sp*/L_w\sp{*p})\sp{\oplus r}$. The previous parametrization composed with $V$ gives then a surjective map $G\sp{(p)}(L_w)/\Lambda\sp{(p)}\twoheadrightarrow(L_w\sp*/L_w\sp{*p})\sp{\oplus r}$. In particular, this implies that the coboundary map is surjective, i.e., $\mathrm{Sel}(L_w,F)\cong(L_w\sp*/L_w\sp{*p})\sp{\oplus r}$. Finally once more taking Galois invariants we get $$\mathrm{Sel}(K_v,F)\cong\mathrm{Sel}(L_w,F)\sp{\mathrm{Gal}(L_w/K_v)}\cong(K_v\sp*/K_v\sp{*p})\sp{\oplus r},$$
(cf. \cite[\S3]{ul}).

\subsection{Global result}
We denote by $v,\mathrm{good}$ the set of places $v$ of $K$ where $A$ has good reduction. Similarly $v,\mathrm{bad}$ denotes the set of places $v$ of $K$ where $A$ has bad reduction. Let $$D=\sum_{v,\mathrm{bad}}[v]-\sum_{v,\mathrm{good}}i_v\cdot[v]\in\mathrm{Div}\,\mathcal{C},\text{ where }i_v=-p\cdot v(\mathcal{D}_{\mathcal{A}/\mathcal{C}}).$$ 
We observe that 
$$0<\deg D\le f_{A/K}+p\cdot h_{\mathrm{diff}}(A/K).$$ 
Note that there exists an injective map $K\sp*/K\sp{*p}\hookrightarrow\Omega\sp1_K$ given by $\bar{f}\mapsto df/f$. Observe that the image is exactly $(\Omega\sp1_{\mathcal{C}})\sp{\mathscr{C}}$. The local results imply :  $\bar{f}\in\mathrm{Sel}(K,F)$ if and only if $df/f\in H\sp0(\mathcal{C},\Omega\sp1_{\mathcal{C}}(-D))\sp{\mathscr{C}}$. By the Riemann-Roch theorem, one gets
\begin{multline*}
\dim_{\mathbb{F}_q}H\sp0(\mathcal{C},\Omega\sp1_{\mathcal{C}}(-D))\sp{\mathscr{C}}\leq \dim_{\mathbb{F}_q}H\sp0(\mathcal{C},\Omega\sp1_{\mathcal{C}}(-D))\\ 
= g-1+\deg D\le g-1+f_{A/K}+p\cdot h_{\mathrm{diff}}(A/K).
\end{multline*}

\begin{rem}
As we have mentioned before $p\sp e$ is the inseparable degree of the map $\mathcal{C}\to\bar{\mathscr{M}}_g$ from $\mathcal{C}$ to the compactified moduli space $\bar{\mathscr{M}}_g$ of genus $g$ curves induced from a model $\mathcal{X}\to\mathcal{C}$ of $X/K$. This invariant is indeed birational. It may be interpreted as follows : $p\sp e$ is the largest power of $p$ such that $X$ is defined over $K\sp{p\sp e}$, but not over $K\sp{p\sp{e+1}}$.
\end{rem}

We need to assume from now on that $p>2d+1$. In this case, if $\ell\ne p$ is a prime number and $L=K(A[\ell])$ , then $A$ has semi-abelian reduction over $L$. Furthermore, since $L/K$ is tamely ramified of degree prime to $p$, then the Swan conductor makes no contribution to $f_{A/K}$. In fact, $\mathfrak{F}_{A/K}=\sum_v\epsilon_v\cdot[v]$. We can now apply the abc-theorem for semi-abelian schemes in characteristic $p>0$ (cf. \cite[Theorem 5.3]{HiPa}) to get the following bound :
$$h_{\mathrm{diff}}(A_L/L)\leq \frac{1}{2}\cdot p^e \cdot d\cdot (2g-2+f_{A_L/L}).$$
By  \cite[Proposition 3.7]{Pach} page 371, one has $f_{A_L/L}\leq [L:K]\cdot f_{A/K}\leq \ell^{4d^2}\cdot f_{A/K}$. Choosing $\ell=2$ (remember that $p>2d+1\geq 3$, so $p\neq 2$) provides:
\begin{equation}
h_{\mathrm{diff}}(A_L/L)\leq \frac{1}{2}\cdot p^e \cdot d\cdot (2g-2+2^{4d^2}\cdot f_{A/K}).
\end{equation}
Let $c_0=g-1+f_{A/K}+\frac{1}{2}\cdot p^e \cdot d\cdot (2g-2+2^{4d^2}\cdot f_{A/K})$. Then $\#\mathrm{Sel}_{A_L}(L,F)\le q\sp{c_0}$. Recall that since $L/K$ is Galois of order prime to $p$, then $\mathrm{Sel}_{A_L}(L,F)^G=\mathrm{Sel}_A(K,F)$, we conclude that $\# \mathrm{Sel}_A(K,F)\leq q^{c_0}$. Denote $C_{desc}=q\sp{c_0}$.

\subsection{Using $F$-descent and finishing the proof}
The following lemma allows us to conclude the proof of item (b) of Theorem \ref{bound}.

\begin{lem}
Let $X\hookrightarrow J_X$ be a curve over a field $K$ as before embedded into its Jacobian variety $J_X$. Suppose $X$ is defined over $K^{p^e}$ and not over $K^{p^{e+1}}$. Suppose that one has the estimate 
$$\#(J_X(K)/F(J_X(K)))\leq C_{\mathrm{desc}}.$$ 
Then one obtains the upper bound  
$$\#X(K)\leq C_{\mathrm{BV}}\cdot C_{\mathrm{desc}}\sp e,$$
where $C_{\mathrm{BV}}=p^{2d\cdot (2g+1)+f_{X/K}} \cdot3^d \cdot(8d-2)\cdot d!$. 
\end{lem}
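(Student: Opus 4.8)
The plan is to combine the Buium--Voloch bound from Theorem \ref{bvthm} (which handles the case when $X$ is not defined over $K^p$) with an $F$-descent argument that iterates $e$ times, peeling off one power of Frobenius at each step. The key observation is that since $X$ is defined over $K^{p^e}$, we may write $X=X_0^{(p^e)}$ for a curve $X_0$ defined over $K$ and \emph{not} over $K^p$, and there is a tower of relative Frobenius morphisms $X_0\to X_0^{(p)}\to\cdots\to X_0^{(p^e)}=X$. On the level of Jacobians this gives maps $F:J_{X_0^{(p^i)}}\to J_{X_0^{(p^{i+1})}}$, and the hypothesis $\#(J_X(K)/F(J_X(K)))\le C_{\mathrm{desc}}$ controls the size of the cokernel of each such $F$ on $K$-points.

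First I would make precise how $\#X(K)$ at the top of the tower relates to $\#X_0(K)$ at the bottom. The map $X\to J_X$ embeds $X(K)$ into $J_X(K)$, and composing with the descent index I would partition $X(K)$ into fibers over $J_X(K)/F(J_X(K))$. Each such coset, pulled back along $F$, corresponds to $K$-points of $X^{(p^{-1})}=X_0^{(p^{e-1})}$ lying in a coset of the image, so that $\#X(K)$ is bounded by $C_{\mathrm{desc}}$ times the number of $K$-points of the curve one level down. Iterating this relation $e$ times descends from $X=X_0^{(p^e)}$ down to $X_0$, which is defined over $K$ but not over $K^p$, contributing a factor of $C_{\mathrm{desc}}^e$. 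At the bottom, $X_0$ satisfies the hypotheses of part (a), so Theorem \ref{bvthm} applied to $X_0$ (with the argument from the proof of part (a), bounding $\#(\Gamma/p\Gamma)$ via Remarks \ref{remJ}, \ref{remrank} and Proposition \ref{propf}) yields the single factor $C_{\mathrm{BV}}=p^{2d\cdot(2g+1)+f_{X/K}}\cdot 3^d\cdot(8d-2)\cdot d!$. Multiplying the two contributions gives $\#X(K)\le C_{\mathrm{BV}}\cdot C_{\mathrm{desc}}^e$.

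The step I expect to be the main obstacle is making the fiberwise counting rigorous: one must verify that a $K$-rational point of $X^{(p^i)}$ whose image in $J_{X^{(p^i)}}(K)$ lands in the image of $F$ actually lifts to a $K$-rational point of $X^{(p^{i-1})}$, rather than merely to a $K_s$-point. This is essentially the content of Frobenius descent for the curve sitting inside its Jacobian, and it is where the separability/inseparability structure of $F$ (and the fact that $X_0$ is genuinely not defined over $K^p$, so the tower does not collapse) must be used carefully. The invariance of the genus $d$, and of the conductor and height data, under the Frobenius twists $X_0^{(p^i)}$ also needs to be checked so that the constants $C_{\mathrm{BV}}$ and $C_{\mathrm{desc}}$ genuinely do not depend on the level $i$ in the tower; this is where the remark that $p^e$ is a birational invariant independent of the model is invoked.
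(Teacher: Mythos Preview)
Your proposal is correct and follows essentially the same route as the paper's proof: build the Frobenius tower $X_0\to X_0^{(p)}\to\cdots\to X_0^{(p^e)}=X$, bound each step by the index $(J_{X_0^{(p^i)}}(K):F(J_{X_0^{(p^{i-1})}}(K)))\le C_{\mathrm{desc}}$, and apply part~(a) to $X_0$ at the bottom. The two obstacles you flag are exactly the ones the paper addresses: the lifting of $K$-points through $F$ is handled by the single remark that $F$ is purely inseparable (so $\ker F$ has no $K$-points, $F$ is injective on $K$-points of the Jacobian, and the scheme-theoretic preimage $F^{-1}(X_0^{(p^i)})$ in $J_{X_0^{(p^{i-1})}}$ is set-theoretically $X_0^{(p^{i-1})}$), while the invariance of the constants along the tower is disposed of by observing that the $J_{X_0^{(p^i)}}$ are mutually $F$-isogenous and hence share the same conductor.
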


\begin{proof}
Suppose that $X$ is defined over $K$ but not over $K\sp p$. Then without any further hypothesis the theorem is proven in part (a). Suppose now that $X$ is defined over $K\sp p$ but not over $K\sp{p\sp2}$. Then there exists a smooth geometrically connected projective curve $X_1$ defined over $K$ but not over $K\sp p$ such that 
$$F:X_1\to X_1\sp{(p)}=X$$ 
is the relative Frobenius morphism of $X_1$. Consider the following decomposition in lateral classes 
$$X(K)=\bigcup_iF(X_1(K))+P_i.$$ 
Under the embedding $\jmath:X\hookrightarrow J_X$ this decomposition is included in the decomposition 
$$\bigcup_iF(J_{X_1}(K))+\jmath(P_i).$$
Note that these classes are not necessarily distinct, however this decomposition is contained in the decomposition 
$$\bigcup_lF(J_{X_1}(K))+\alpha_l,$$
where we are now considering all representatives of $J_X(K)$ modulo $F(J_{X_1}(K))$. As a consequence we get 
$$(X(K):F(X_1(K))\le(J_X(K):F(J_{X_1}(K))\le\#\mathrm{Sel}_{J_X}(K,\ker F)\le C_{\mathrm{desc}},$$ Recall that $F$ is purely inseparable, therefore $\#F(X_1(K))\le C_{\mathrm{BV}}$. Finally we get 
$$\#X(K)\le C_{\mathrm{BV}}\cdot C_{\mathrm{desc}}.$$

Suppose now that $X$ is defined over $K\sp{p\sp2}$ but not over $X\sp{p\sp3}$. As before there exist curves $X_1,X_2$ (with the same description as in the last paragraph) such that 
$$X_2\overset F\longrightarrow X_1=X_2\sp{(p)}\overset F\longrightarrow X=X_1\sp{(p)}=X_2\sp{(p\sp2)}.$$ 
In this case we have got inequalities 
$$\#X_1(K)\le\#X_2(K)\cdot\#(J_{X_1}(K)/F(J_{X_2}(K)),$$
$$\#X(K)\le\#X_1(K)\cdot\#(J_X(K)/F(J_{X_1}(K)).$$ 
Observe that $\#(J_{X_1}(K)/F(J_{X_2}(K))\le\#\mathrm{Sel}_{J_{X_1}}(K,\ker F)$ and the only invariant for the upper bound of the latter term is its conductor. Since $J_X$ and $J_{X_1}$ are $F$-isogeneous, their conductors coincide.  Whence 
$$\#X(K)\le C_{\mathrm{BV}}\cdot C_{\mathrm{desc}}\sp2.$$
An easy induction argument then finishes the proof. 
\end{proof}

\section{Further remarks}
\begin{rem}
We would now like to compare our result with a result similar in nature when we replace the one variable function field $K$ defined over a finite field $k$ by a number field $K$. In order to do this we refer to the work of {R\'emond} ({cf.} \cite{Rem}).
\end{rem}

\begin{thrm}[R\'emond]
Let $X$ be a smooth, projective, geometrically connected curve of genus $d\geq 2$ defined over a number field $K$, then one has
$$\#X(K)\leq (2\sp{38+2d}\cdot[K:\mathbb{Q}]\cdot d\cdot\max(1,h_{\Theta}))\sp{(r+1)\cdot d\sp{20}},$$
where $h_\Theta$ is the theta height of $J_X$ and $r=\mathrm{rk}\,J_X(K)$. 
\end{thrm}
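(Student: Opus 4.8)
The plan is to prove this by the effective form of Vojta's method, following Faltings, Bombieri and R\'emond. First I would fix the Jacobian embedding $j\colon X\hookrightarrow J_X$ (using a $K$-rational divisor class of degree one if one is available, and otherwise working over a controlled extension) and invoke the Mordell--Weil theorem to write $J_X(K)=\mathbb{Z}^r\oplus J_X(K)_{\mathrm{tor}}$. The real vector space $V=J_X(K)\otimes_{\mathbb{Z}}\mathbb{R}\cong\mathbb{R}^r$ then carries the positive-definite quadratic form $\hat h$ given by the N\'eron--Tate canonical height attached to a symmetric ample (theta) divisor on $J_X$. The strategy is to split $X(K)$ into points of \emph{small} and of \emph{large} canonical height, bound the two families by completely different means, and add the two counts.

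For the large points the key input is the explicit \emph{Vojta inequality}: there are explicit constants, depending only on $d$, such that whenever $P,Q\in X(K)$ have sufficiently large canonical height with $\hat h(Q)\ge c_1\,\hat h(P)$, the image vectors satisfy $\langle j(P),j(Q)\rangle\le\frac34\,|j(P)|\,|j(Q)|$ for the inner product of $\hat h$; that is, points whose heights differ by a definite factor are angularly separated in $V$. Combined with Mumford's gap principle — which forces the canonical heights of points lying in a narrow cone to grow geometrically — this yields, by a sphere-packing argument on the unit sphere $S^{r-1}$, a bound for the number of large points of the shape $c_2^{\,r+1}$ with $c_2$ depending only on $d$. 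The crucial feature is that this count is \emph{independent of the heights}, so that all dependence on $h_\Theta$ is relegated to the small points.

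For the small points I would count directly. Taking the small/large threshold to be an explicit polynomial $T$ in $[K:\mathbb{Q}]$, $d$ and $h_\Theta$, I use that $j$ is injective, so the number of points of $X(K)$ with $\hat h\le T$ is at most the number of classes in $J_X(K)$ with $\hat h\le T$; by a standard lattice-point estimate in terms of the successive minima this is at most $(1+c_3\sqrt{T})^r\cdot\#J_X(K)_{\mathrm{tor}}$. The passage between $\hat h$ and the theta height — an explicit comparison controlled by $h_\Theta$ and $d$ — is exactly what lets one take $T$ polynomial in $h_\Theta$ and so renders this count explicit. Collecting the two contributions and absorbing all purely dimensional constants into a single exponent then produces the stated bound $(2^{38+2d}\,[K:\mathbb{Q}]\,d\,\max(1,h_\Theta))^{(r+1)\,d^{20}}$.

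The main obstacle is the explicit Vojta inequality itself. Establishing it requires constructing, on the arithmetic surface $X\times X$, a global section of a carefully chosen metrized line bundle (a Vojta divisor) having controlled index along the diagonal, estimating its arithmetic self-intersection and height by Arakelov Riemann--Roch together with the arithmetic Hodge index inequality, and then invoking an effective zero-estimate (an arithmetic Roth/Dyson lemma) to exclude a large index at a pair of rational points of the prescribed height ratio. Carrying \emph{every} geometric constant through this construction — the choice of metric, the degrees of the auxiliary divisors on $X$ and on $J_X$, and their precise dependence on $h_\Theta$ — with enough uniformity to reach the exponent $d^{20}$ is the genuinely difficult part; by contrast, the packing argument for the large points and the lattice-point count for the small points are comparatively routine once that inequality is in hand.
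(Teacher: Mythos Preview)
The paper does not prove this theorem at all: it is stated in the ``Further remarks'' section as a result of R\'emond and simply attributed to \cite{Rem}, with no argument given. There is therefore nothing in the paper's own text to compare your proposal against.

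That said, your outline is a faithful sketch of the method actually used in \cite{Rem}: embed $X$ in $J_X$, split $X(K)$ into small and large points for the N\'eron--Tate height, bound the large points via the explicit Vojta inequality combined with Mumford's gap principle and a sphere-packing argument in $J_X(K)\otimes\mathbb{R}$, and bound the small points by a lattice-point count controlled by $h_\Theta$. You correctly identify the explicit Vojta inequality (with all constants tracked through Arakelov Riemann--Roch, the arithmetic Hodge index, and a zero estimate) as the substantive step. One minor imprecision: the torsion of $J_X(K)$ does not appear in R\'emond's final bound because the map $j$ is injective and one counts small points directly on the curve rather than passing through the full group $J_X(K)$; but this does not affect the shape of your argument.
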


\begin{rem}
Using Proposition 5.1 page 775 of \cite{Rem}, one has $r\ll \log f_{J_X/K}$, as in the function field case, but the bound on the number of points is still dependent on the height of the Jacobian variety. To be more precise, {R\'emond} shows in {loc. cit.} how to produce a bound depending on the height of a model of the curve (and not of its Jacobian variety), but it seems difficult to get rid of this height. It would be a consequence of a conjecture of {Lang} and {Silverman}, as explained in the introduction of \cite{Paz}. Note that in the function field case, the height of the Jacobian variety $J_X$ is comparable to the degree of its conductor $f_{J_X/K}$, as shown in \cite[Corollary 5.12]{HiPa}.
\end{rem}



\end{document}